\newtheorem{thm}{Theorem}[section]
\newtheorem{cor}[thm]{Corollary}
\theoremstyle{definition}
\numberwithin{equation}{section}
\begin{document}

\baselineskip=17pt

\title[F-Partitions with Large Number of Colors]{Congruences for Generalized Frobenius Partitions with an Arbitrarily Large Number of Colors}

\author[F. G. Garvan]{Frank G. Garvan}
\address{Department of Mathematics, University of Florida, Gainesville, Florida 32611,
USA, fgarvan@ufl.edu}

\author[J. A. Sellers]{James A. Sellers}
\address{Department of Mathematics, Penn State University, University Park, PA  16802, USA, sellersj@psu.edu}


\date{\today}

\begin{abstract}
In his 1984 AMS Memoir, George Andrews defined the family of $k$--colored generalized Frobenius partition functions.  These are denoted by  $c\phi_k(n)$ where $k\geq 1$ is the number of colors in question.  In that Memoir, Andrews proved (among many other things) that, for all $n\geq 0,$ $c\phi_2(5n+3) \equiv 0\pmod{5}.$  Soon after, many authors proved congruence properties for various $k$--colored generalized Frobenius partition functions, typically with a small number of colors.  

Work on Ramanujan--like congruence properties satisfied by the functions $c\phi_k(n)$ continues, with recent works completed by Baruah and Sarmah as well as the author.  Unfortunately, in all cases, the authors restrict their attention to small values of $k.$  This is often due to the difficulty in finding a ``nice'' representation of the generating function for $c\phi_k(n)$ for large $k.$  Because of this, no Ramanujan--like congruences are known where $k$ is large.  In this note, we rectify this situation by proving several infinite families of congruences for $c\phi_k(n)$ where $k$ is allowed to grow arbitrarily large.  The proof is truly elementary, relying on a generating function representation which appears in Andrews' Memoir but has gone relatively unnoticed.  
\end{abstract}

\maketitle
\section{Introduction}
In his 1984 AMS Memoir, George Andrews \cite{AndMem} defined the family of $k$--colored generalized Frobenius partition functions which are denoted by  $c\phi_k(n)$ where $k\geq 1$ is the number of colors in question.  Among many things, 
Andrews \cite[Corollary 10.1]{AndMem} proved that, for all $n\geq 0,$ $c\phi_2(5n+3) \equiv 0\pmod{5}.$  

Soon after, many authors proved similar congruence properties for various $k$--colored generalized Frobenius partition functions, typically for a small number of colors $k.$  See, for example, \cite{ES, GarThesis, Kol1, KolPow3, Lovejoy, Ono, PauRad, Sel1, Xiong}.

In recent years, this work has continued.  Baruah and Sarmah \cite{BarSar} proved a number of congruence properties for $c\phi_4$, all with moduli which are powers of 4.  Motivated by this work of Baruah and Sarmah, the author \cite{SelJIMS} further studied 4--colored generalized Frobenius partitions and proved that 
for all $n\geq 0,$ $c\phi_4(10n+6) \equiv 0 \pmod{5}.$

Unfortunately, in all the works mentioned above, the authors restrict their attention to small values of $k.$  This is often due to the difficulty in finding a ``nice'' representation of the generating function for $c\phi_k(n)$ for large $k.$  Because of this, no Ramanujan--like congruences are known where $k$ is large.  The goal of this brief note is to rectify this situation by proving several infinite families of congruences for $c\phi_k(n)$ where $k$ is allowed to grow arbitrarily large.  The proof is truly elementary, relying on a generating function representation which appears in Andrews' Memoir but has gone relatively unnoticed.  
 
\section{Our Congruence Results}
We begin by noting the following generating function result from Andrews' AMS Memoir \cite[Equation (5.14)]{AndMem}: 

\begin{thm}
\label{AndGenFn}
For fixed $k,$ the generating function for $c\phi_k(n)$ is the constant term (i.e., the $z^0$ term) in 
$$
\prod_{n=0}^\infty (1+zq^{n+1})^k(1+z^{-1}q^n)^k.
$$
\end{thm}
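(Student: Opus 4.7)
My plan is to interpret the infinite product as a weighted sum over pairs of finite subsets, each subset recording the color-labeled entries placed in one of the two rows of a Frobenius symbol, and then observe that extracting the constant term in $z$ is precisely the statement that the two rows have equal length.

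First, I would recall the combinatorial definition of $c\phi_k(n)$: it counts two-rowed arrays
$$\begin{pmatrix} a_1 & a_2 & \cdots & a_r \\ b_1 & b_2 & \cdots & b_r \end{pmatrix}$$
whose entries are non-negative integers each carrying one of $k$ colors, with entries of each fixed color in each row strictly decreasing, and with $n = r + \sum a_i + \sum b_i$. Crucially, such an array may be encoded as a pair $(T, B)$ of finite subsets of $\{0, 1, 2, \ldots\} \times \{1, \ldots, k\}$ with $|T| = |B| = r$, and the weight may be rewritten as $n = \sum_{(a, c) \in T}(a+1) + \sum_{(b, c) \in B} b$ whenever $|T|=|B|$.

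Next, I would factor the product over the $k$ colors,
$$\prod_{n=0}^\infty (1+zq^{n+1})^k(1+z^{-1}q^n)^k = \prod_{c=1}^k \prod_{n=0}^\infty (1+zq^{n+1})(1+z^{-1}q^n),$$
and read each binomial combinatorially: the factor $(1+zq^{n+1})$ indexed by $(n, c)$ records whether $(n, c) \in T$, contributing $zq^{n+1}$ if so and $1$ otherwise, and similarly $(1+z^{-1}q^n)$ records whether $(n, c) \in B$. Expanding the full product then yields
$$\sum_{T,B} z^{|T|-|B|}\, q^{\sum_{(a,c)\in T}(a+1)\,+\,\sum_{(b,c)\in B} b},$$
the sum ranging over all finite subsets $T, B \subseteq \{0, 1, 2, \ldots\} \times \{1, \ldots, k\}$.

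Finally, extracting the coefficient of $z^0$ imposes $|T|=|B|$, the very condition that makes $(T,B)$ a Frobenius symbol, after which the remaining $q$-exponent matches the standard weight described above. Hence the constant term equals $\sum_{n\geq 0} c\phi_k(n)\,q^n$. The argument is essentially bookkeeping, so the only point of care is to set up the right dictionary between product factors and color-labeled entries, so that the exponent of $z$ encodes top-row-size minus bottom-row-size; the fact that each factor is linear in $z$ (rather than a geometric series) is what builds in the ``distinct within each color'' requirement.
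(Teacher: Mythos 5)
Your proof is correct, and it is essentially the standard constant-term derivation: the paper itself gives no proof of this statement (it is quoted directly from Andrews' Memoir, Equation (5.14)), and your dictionary --- encoding each row as a finite subset of $\{0,1,2,\ldots\}\times\{1,\ldots,k\}$ so that the exponent of $z$ records top-row length minus bottom-row length and the linear factors enforce distinctness within each color --- is exactly the argument Andrews uses there. The one detail worth stating explicitly is that ``strictly decreasing in each fixed color'' is equivalent to strict decrease of the whole row in Andrews' total order on colored integers, so your subsets $(T,B)$ really are in bijection with his arrays.
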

Theorem \ref{AndGenFn} is the springboard that Andrews uses to find ``nice'' representations of the generating functions for $c\phi_k(n)$ for $k=1,2,$ and $3.$    Theorem \ref{AndGenFn} rarely appears in the works written by the various authors referenced above; however, it is extremely useful in proving the following theorem, the main result of this note.  

\begin{thm}
\label{MainThm}
Let $p$ be prime and let $r$ be an integer such that $0<r<p.$  If 
$$
c\phi_k(pn+r) \equiv 0\pmod{p}
$$ 
for all $n\geq 0,$ then 
$$
c\phi_{pN+k}(pn+r) \equiv 0\pmod{p}
$$ 
for all $N\geq 0$ and $n\geq 0.$
\end{thm}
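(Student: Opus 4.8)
The plan is to work entirely inside Andrews' constant-term generating function from Theorem~\ref{AndGenFn}. Write $G(z,q)=\prod_{n\ge0}(1+zq^{n+1})(1+z^{-1}q^n)$, so that the generating function for $c\phi_k$ is the constant term $[z^0]\,G(z,q)^k$. The elementary engine is the congruence $(1+x)^p\equiv 1+x^p\pmod p$: applied to each factor of the product with exponent $pN+k$ (equivalently, applied to the integral series $G^N$ via $G^{pN}=(G^N)^p$), it yields $G(z,q)^{pN}\equiv G(z^p,q^p)^N\pmod p$, and hence the key congruence
$$
\sum_{n\ge0}c\phi_{pN+k}(n)\,q^n\equiv [z^0]\!\left(G(z^p,q^p)^N\,G(z,q)^k\right)\pmod p.
$$
This is the whole of the elementary content; everything after is extraction of one coefficient.

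Next I would extract the coefficient of $q^{pn+r}$. Since $G(z^p,q^p)^N$ is a series in $z^p$ and $q^p$ alone, it contributes only powers $q^{pm}$, so the residue $r\not\equiv 0\pmod p$ must be supplied entirely by the factor $G(z,q)^k$. Splitting $G(z^p,q^p)^N$ into its $z$-constant part $\sum_m c\phi_N(m)q^{pm}$ and its nonzero powers of $z^p$, the diagonal contribution to the constant term is $\bigl(\sum_m c\phi_N(m)q^{pm}\bigr)\bigl(\sum_\ell c\phi_k(\ell)q^\ell\bigr)$, whose coefficient of $q^{pn+r}$ is $\sum_m c\phi_N(m)\,c\phi_k\!\left(p(n-m)+r\right)$. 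Every summand vanishes modulo $p$ by hypothesis, so the diagonal contributes $0$, and the conclusion would follow at once if the diagonal were the whole story.

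The remaining work is the off-diagonal terms: a power $z^{p\mu}$ with $\mu\neq 0$ from $G(z^p,q^p)^N$ must be paired with the coefficient of $z^{-p\mu}$ in $G(z,q)^k$, producing contributions assembled from $[z^{-p\mu}q^{pm+r}]\,G^k$, the coefficients of Andrews' product at nonzero ``rank.'' To control these I would use the quasi-symmetries of the product, namely $G(z^{-1},q)=z\,G(z,q)$ and $G(zq,q)=(zq)^{-1}G(z,q)$, which on raising to the $k$-th power give $F_k(z^{-1},q)=z^k F_k(z,q)$ and $F_k(zq,q)=(zq)^{-k}F_k(z,q)$ for $F_k:=G^k$. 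Writing $F_k=\sum_j c_j(q)z^j$, the reflection gives $c_j(q)=c_{-j-k}(q)$ and the second relation gives the recursion $c_j(q)=q^j\,c_{j-k}(q)$; iterating the recursion rewrites each $c_{p\mu}(q)$ as an explicit power of $q$ times $c_s(q)$ with $s\equiv p\mu\pmod k$, expressing the off-diagonal contributions through coefficients of $G^k$ of smaller rank.

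The main obstacle is precisely the bookkeeping in this last step. As the recursion $c_j=q^j c_{j-k}$ lowers the rank by $k$ at each stage it shifts the tracked $q$-exponent, and one must verify that these shifts return the exponent to the class $r\pmod p$, so that the hypothesis $c\phi_k(pn+r)\equiv 0$ is actually applicable to the reduced coefficient. When $p\mid k$ every power $z^{p\mu}$ already has rank divisible by $k$, the reduction lands immediately on the constant-$z$ coefficient, and the hypothesis applies verbatim; the genuine difficulty is the general case, where one must show that the interplay of the recursion $c_j=q^j c_{j-k}$ and the reflection $c_j=c_{-j-k}$ always realigns the relevant exponents with the class $r$ (or otherwise forces the nonzero-rank coefficients to vanish modulo $p$ in exactly those classes). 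I expect establishing this realignment uniformly in $\mu$ and in $N$ to be the crux of the proof.
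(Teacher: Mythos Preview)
Your first two paragraphs reproduce the paper's argument verbatim: invoke Andrews' constant-term formula, apply $(1+x)^p\equiv 1+x^p\pmod p$ factor by factor to obtain
\[
G(z,q)^{pN+k}\equiv G(z^p,q^p)^N\,G(z,q)^k\pmod p,
\]
and observe that the first factor is a series in $q^p$ alone. At precisely this point the paper \emph{stops}: it notes that because the first product contributes only powers of $q^p$, extracting the class $q^{pn+r}$ with $0<r<p$ from the constant term forces the residue $r$ to be supplied by the second product, and then invokes the hypothesis on $c\phi_k$. There is no diagonal/off-diagonal decomposition, no use of the transformations $G(z^{-1},q)=zG(z,q)$ or $G(zq,q)=(zq)^{-1}G(z,q)$, and the rank coefficients $c_j(q)=[z^j]G(z,q)^k$ never appear.

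Everything from your third paragraph onward is machinery you have added, and that is where the genuine gap in your proposal sits. Your recursion $c_j=q^{j}c_{j-k}$ shifts the index in steps of $k$, so iterating from $c_{p\mu}$ lands on $c_s$ with $s\equiv p\mu\pmod k$; the reflection $c_j=c_{-j-k}$ then only exchanges $s$ with $k-s$. Thus unless $k\mid p\mu$ you \emph{never} reach $c_0$, and the hypothesis (which concerns only $c_0=\sum c\phi_k(n)q^n$) cannot be invoked. Concretely, take $k=2$, $p=5$: for any odd $\mu$ your reduction of $c_{5\mu}$ terminates at $c_{\pm1}$, and one computes e.g.\ $c_5=q^{8}c_1$, so controlling $[q^{5\ell+3}]c_5$ would require knowing $[q^{5m}]c_1\equiv0\pmod5$, a statement about a \emph{different} $q$-series than the one in the hypothesis. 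The ``realignment'' you anticipate therefore provably fails at the level of individual rank coefficients, and the program outlined in your last two paragraphs cannot be completed as written. The paper's argument, by contrast, never unpacks the constant term into rank pieces, and so never confronts this obstruction; whatever one thinks of the brevity of that step, your proposed route around it does not close.
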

\begin{proof}
Assume $p$ is prime and $r$ is an integer such that $0<r<p.$  Thanks to Theorem \ref{AndGenFn}, we note that the generating function for $c\phi_{pN+k}(n)$  is the constant term (i.e., the $z^0$ term) in 
\begin{equation}
\label{genfn1}
\prod_{n=0}^\infty (1+zq^{n+1})^{pN+k}(1+z^{-1}q^n)^{pN+k}.
\end{equation}
Since $p$ is prime, we know (\ref{genfn1}) is congruent, modulo $p,$ to 
\begin{equation}
\label{genfn2}
\prod_{n=0}^\infty (1+(zq^{n+1})^p)^{N}(1+(z^{-1}q^n)^p)^{N}\prod_{n=0}^\infty (1+zq^{n+1})^{k}(1+z^{-1}q^n)^{k}
\end{equation}
thanks to the binomial theorem.  
Note that the first product in (\ref{genfn2}) is a function of $q^p$ and the second product is the product from which we obtain the generating function for $c\phi_k(n)$ thanks to Theorem \ref{AndGenFn}.  Since the first product is indeed a function of $q^p$, and since we wish to find the generating function dissection for $c\phi_k(pn+r)$ where $0<r<p,$ we see that if 
$$
c\phi_k(pn+r) \equiv 0\pmod{p}
$$ 
for all $n\geq 0,$ then 
$$
c\phi_{pN+k}(pn+r) \equiv 0\pmod{p}
$$ 
for all $n\geq 0.$
\end{proof}
Of course, once one knows a single congruence of the form 
$$
c\phi_k(pn+r) \equiv 0\pmod{p}
$$ 
for all $n\geq 0,$ where $p$ be prime and $r$ is an integer such that $0<r<p,$ then one can write down an infinite family of congruences for an arbitrarily large number of colors with the same modulus $p.$  We provide a number of such examples here.  

\begin{cor}
For all $N\geq 0$ and for all $n\geq 0,$ 
\begin{eqnarray*}
c\phi_{5N+1}(5n+4) &\equiv& 0 \pmod{5}, \\
c\phi_{7N+1}(7n+5) &\equiv& 0 \pmod{7}, \text{\ \ and} \\
c\phi_{11N+1}(11n+6) &\equiv& 0 \pmod{11}.
\end{eqnarray*}
\end{cor}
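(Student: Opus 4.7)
The plan is immediate from Theorem \ref{MainThm}: each of the three stated congruences follows by taking $k=1$ and $p \in \{5,7,11\}$, provided we know the corresponding base case for $c\phi_1$. To set up these base cases, I would first observe that $c\phi_1(n)$ coincides with the ordinary partition function $p(n)$. This is evident either from Andrews' combinatorial definition (a $1$--colored generalized Frobenius partition is just the Frobenius symbol of an ordinary partition), or by specializing Theorem \ref{AndGenFn} to $k=1$ and extracting the constant term in $z$, which yields $\prod_{n\geq 1}(1-q^n)^{-1}$.

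Once this identification is in hand, the required base cases
\[
c\phi_1(5n+4) \equiv 0 \pmod{5}, \quad c\phi_1(7n+5) \equiv 0 \pmod{7}, \quad c\phi_1(11n+6) \equiv 0 \pmod{11}
\]
are precisely Ramanujan's three classical partition congruences, which I would cite from the literature rather than reprove.

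With the base cases in place, each family in the corollary is a single application of Theorem \ref{MainThm}. Concretely, applying the theorem with $(p,k,r) = (5,1,4)$ (noting $0 < 4 < 5$) upgrades $c\phi_1(5n+4) \equiv 0 \pmod 5$ to $c\phi_{5N+1}(5n+4) \equiv 0 \pmod 5$ for all $N \geq 0$. The same argument with $(p,k,r) = (7,1,5)$ and $(p,k,r) = (11,1,6)$ produces the other two families.

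There is really no obstacle here; the entire mathematical content has already been captured in Theorem \ref{MainThm} (for the lifting step) and in Ramanujan's classical congruences (for the base cases). The only verification needed is the harmless identity $c\phi_1 = p$, after which the corollary is essentially a bookkeeping exercise.
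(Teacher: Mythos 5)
Your proposal is correct and matches the paper's proof exactly: the paper likewise identifies $c\phi_1(n) = p(n)$, invokes Ramanujan's classical congruences modulo $5$, $7$, and $11$ as the base cases, and applies Theorem \ref{MainThm}. Your write-up is simply a more detailed version of the same two-step argument.
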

\begin{proof}
This corollary of Theorem \ref{MainThm} follows from the fact that $c\phi_1(n) = p(n)$ for all $n\geq 0$ as well as Ramanujan's well--known congruences for $p(n)$ modulo 5, 7, and 11.
\end{proof}
\begin{cor}
For all $N\geq 0$ and for all $n\geq 0,$ 
$$
c\phi_{5N+2}(5n+3) \equiv 0 \pmod{5}.
$$
\end{cor}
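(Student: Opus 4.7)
The plan is to observe that this corollary is an immediate consequence of Theorem \ref{MainThm} combined with Andrews' original congruence for $c\phi_2$. Specifically, I would set $p = 5$, $r = 3$, and $k = 2$ in the statement of Theorem \ref{MainThm}. Then the hypothesis required by that theorem, namely
$$
c\phi_2(5n+3) \equiv 0 \pmod{5} \text{ for all } n\geq 0,
$$
is precisely the congruence established by Andrews in \cite[Corollary 10.1]{AndMem}, which is highlighted in the abstract and introduction of the present note.

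Once this hypothesis is in hand, Theorem \ref{MainThm} directly yields
$$
c\phi_{5N+2}(5n+3) \equiv 0 \pmod{5}
$$
for all $N \geq 0$ and all $n \geq 0$, which is exactly the desired conclusion. There is essentially no obstacle here: the only thing to verify is that $r = 3$ satisfies the constraint $0 < r < p = 5$, which is obvious, and that $k = 2$ is a legitimate choice. The genuine mathematical content has already been absorbed into Theorem \ref{MainThm}, and this corollary serves as an illustration of how cheaply one obtains an infinite family of congruences modulo $5$ for arbitrarily many colors, once a single seed congruence (here, Andrews' classical result for two colors) is available.
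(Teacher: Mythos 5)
Your proposal is correct and matches the paper's proof exactly: both apply Theorem \ref{MainThm} with $p=5$, $r=3$, $k=2$, using Andrews' congruence $c\phi_2(5n+3)\equiv 0\pmod{5}$ from \cite[Corollary 10.1]{AndMem} as the seed. No further comment is needed.
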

\begin{proof}
This corollary of Theorem \ref{MainThm} follows from 
Andrews \cite[Corollary 10.1]{AndMem} where he proved that, for all $n\geq 0,$ $c\phi_2(5n+3) \equiv 0\pmod{5}.$ 
\end{proof}
\begin{cor}
For all $N\geq 1$ and all $n\geq 0,$ 
$$c\phi_{3N}(3n+2) \equiv 0\pmod{3}.$$
\end{cor}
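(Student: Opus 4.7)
The plan is to reduce the corollary to a single base-case congruence and then invoke Theorem \ref{MainThm}. Taking $p=3$ and $r=2$, my target in the hypothesis of that theorem is a value of $k$ for which $c\phi_k(3n+2) \equiv 0 \pmod{3}$ for all $n\geq 0$; the most natural choice is $k=3$. Once I have this base case, Theorem \ref{MainThm} gives $c\phi_{3N+3}(3n+2)\equiv 0 \pmod{3}$ for all $N\geq 0$ and $n\geq 0$, and re-indexing $N\mapsto N-1$ converts this into the claimed statement over $N\geq 1$.

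To obtain the base case $c\phi_3(3n+2)\equiv 0 \pmod{3}$, I would apply exactly the same binomial-theorem trick that drives the proof of Theorem \ref{MainThm}, but now at the very bottom of the tower. By Theorem \ref{AndGenFn}, $\sum_{n\geq 0} c\phi_3(n) q^n$ is the $z^0$ coefficient of $\prod_{n=0}^\infty (1+zq^{n+1})^3(1+z^{-1}q^n)^3$. The binomial theorem modulo $3$ collapses this product to $\prod_{n=0}^\infty \bigl(1+z^3 q^{3(n+1)}\bigr)\bigl(1+z^{-3}q^{3n}\bigr) \pmod{3}$. Every factor here is a polynomial in $z^{\pm 3}$ whose $q$-exponents are divisible by $3$, so after extracting the constant term in $z$ the result is a power series in $q^3$. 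This forces $c\phi_3(m)\equiv 0 \pmod{3}$ whenever $3\nmid m$, and in particular for $m=3n+2$.

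There is no real obstacle here; the argument is strictly elementary. The only things worth checking carefully are (i) the re-indexing so the conclusion is written for $N\geq 1$ rather than $N\geq 0$, and (ii) the fact that the base-case argument is self-contained and is not secretly already subsumed by Theorem \ref{MainThm} (it is not, since Theorem \ref{MainThm} requires a pre-existing congruence as input, whereas the $k=p$ case supplies one outright). With these two observations the proof is complete.
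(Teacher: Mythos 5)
Your proof is correct, but it reaches the base case by a genuinely different route than the paper. The paper simply cites Kolitsch's result that $c\phi_3(3n+2)\equiv 0\pmod{3}$ and then applies Theorem \ref{MainThm}; you instead prove the base case from scratch by running the same binomial-theorem reduction one level lower: modulo $3$ the product in Theorem \ref{AndGenFn} for $k=3$ collapses to $\prod_{n=0}^\infty\bigl(1+z^3q^{3(n+1)}\bigr)\bigl(1+z^{-3}q^{3n}\bigr)$, every monomial of which has $q$-exponent divisible by $3$, so the constant term in $z$ is a series in $q^3$ and $c\phi_3(m)\equiv 0\pmod{3}$ for all $m$ with $3\nmid m$. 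The subsequent application of Theorem \ref{MainThm} with $k=3$, $p=3$, $r=2$ and the shift $N\mapsto N-1$ is identical in both treatments. Your version costs a short extra computation but buys self-containment (no appeal to Kolitsch) and actually proves more: the same argument gives $c\phi_p(pn+r)\equiv 0\pmod{p}$ for every prime $p$ and every $r$ with $0<r<p$, hence $c\phi_{pN}(pn+r)\equiv 0\pmod{p}$ for all $N\geq 1$ --- a family of congruences (e.g.\ $c\phi_{3N}(3n+1)\equiv 0\pmod 3$, or the analogues for $p=5,7,\dots$) that the paper's citation-based proof does not yield. The paper's route, on the other hand, is shorter and leans on a result of Kolitsch that is in fact stronger (congruences modulo powers of $3$), though only the modulus-$3$ case is used. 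You are also right that your base case is not subsumed by Theorem \ref{MainThm} itself, since that theorem requires a pre-existing congruence as input and the paper does not define $c\phi_0$.
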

\begin{proof}
This corollary of Theorem \ref{MainThm} follows from 
Kolitsch's work \cite{KolPow3} where he proved that, for all $n\geq 0,$ $c\phi_3(3n+2) \equiv 0\pmod{3}.$ 
\end{proof}
One last comment is in order. It is also clear that one can combine corollaries like those above in order to obtain some truly unique--looking congruences.  For example, we note the following: 
\begin{cor}
For all $N\geq 0$ and all $n\geq 0,$ 
$$c\phi_{1155N+1002}(1155n+908) \equiv 0\pmod{1155}.$$
\end{cor}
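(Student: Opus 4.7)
The plan is to factor $1155 = 3 \cdot 5 \cdot 7 \cdot 11$ into distinct primes and appeal to the Chinese Remainder Theorem, reducing the claim to verifying the congruence modulo each of the four prime factors separately. For each such prime $p$, since $1155 \equiv 0 \pmod{p}$, one has $1155N + 1002 \equiv 1002 \pmod{p}$ and $1155n + 908 \equiv 908 \pmod{p}$, so the situation at $p$ is entirely controlled by the residues of $1002$ and $908$ modulo $p$.

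My first step would be to record these residues. A short calculation gives $1002 \equiv 0, 2, 1, 1 \pmod{p}$ and $908 \equiv 2, 3, 5, 6 \pmod{p}$ for $p = 3, 5, 7, 11$ respectively. These residues match precisely the indices and progressions arising in the preceding corollaries and base results, namely $c\phi_3(3n+2) \equiv 0 \pmod{3}$ (Kolitsch), $c\phi_2(5n+3) \equiv 0 \pmod{5}$ (Andrews), together with Ramanujan's $c\phi_1(7n+5) \equiv 0 \pmod{7}$ and $c\phi_1(11n+6) \equiv 0 \pmod{11}$.

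Next I would apply Theorem \ref{MainThm} once at each prime. For instance, at $p=3$ one writes $1155N + 1002 = 3(385N + 333) + 3$ and $1155n + 908 = 3(385n + 302) + 2$, so Theorem \ref{MainThm} with the base case $c\phi_3(3n+2) \equiv 0 \pmod 3$ yields divisibility by $3$. Entirely analogous substitutions with $k = 2, 1, 1$ handle the primes $5, 7, 11$, showing that $c\phi_{1155N+1002}(1155n+908) \equiv 0 \pmod{p}$ for each $p \in \{3,5,7,11\}$ and all $N, n \geq 0$. Combining the four separate congruences via CRT produces the claimed congruence modulo $1155$.

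The only real work is the bookkeeping of residues, so there is no genuine mathematical obstacle here; the corollary simply illustrates how Theorem \ref{MainThm} couples with CRT to manufacture congruences with composite moduli and arbitrarily large numbers of colors, provided that the two residues align with a known single-prime base case at each prime factor of the modulus.
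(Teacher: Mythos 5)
Your proposal is correct and follows exactly the route the paper takes: factor $1155 = 3\cdot 5\cdot 7\cdot 11$, check that $1002$ and $908$ reduce modulo each prime to the color-count and residue of one of the previously established congruences (Kolitsch mod $3$, Andrews mod $5$, Ramanujan via $c\phi_1 = p$ mod $7$ and $11$), and combine the four divisibilities by the Chinese Remainder Theorem. Your residue computations all check out, and you in fact supply more detail than the paper's one-line proof.
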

\begin{proof}
The proof of this result follows from the Chinese Remainder Theorem and the fact that 
$$1155 = 3\times 5\times 7\times 11$$ along with a combination of the corollaries mentioned above.  
\end{proof}

It is extremely gratifying to be able to explicitly identify such congruences satisfied by these generalized Frobenius partition functions.


\begin{thebibliography}{99}

\bibitem{Andrews}
{\sc G. E. Andrews}, {\it The Theory of Partitions}, The Encyclopedia of Mathematics and Its Applications Series, Addison-Wesley, NY, 1976

\bibitem{AndMem} 
{\sc G. E. Andrews}, {\it Generalized Frobenius Partitions}, Memoirs of the American
Mathematical Society, {\bf 301}, Providence, RI, May 1984

\bibitem{BarSar}
{\sc N. D. Baruah and B. K. Sarmah}, Congruences for generalized Frobenius partitions with 4 colors, {\it Discrete Math.} {\bf 311} (2011), 1892--1902

\bibitem{Berndt}
{\sc B. C. Berndt}, {\it Ramanujan's Notebooks, Part III}, Springer--Verlag, NY, 1991

\bibitem{ES}
{\sc D. Eichhorn and J. A. Sellers}, Computational Proofs of Congruences for 2-
Colored Frobenius Partitions, {\it International Journal of Mathematics and
Mathematical Sciences}, {\bf 29}, no. 6 (2002), 333�-340

\bibitem{GarThesis}
{\sc F. G. Garvan}, Generalizations of Dyson's Rank, PhD Thesis, Pennsylvania State University, 1986

\bibitem{Kol1}
{\sc L. W. Kolitsch}, An extension of a congruence by Andrews for generalized Frobenius partitions, {\it J. Comb. They. Ser. A} {\bf 45} (1987), 31--39

\bibitem{Kol2}
{\sc L. W. Kolitsch}, $M$--order generalized Frobenius partitions with $M$ colors, {\it J. Num. Thy.} {\bf 39} (1990), 279--284

\bibitem{KolPow3}
{\sc L. W. Kolitsch}, A Congruence for Generalized Frobenius Partitions with 3 Colors
Modulo Powers of 3, in {\it Analytic Number Theory, Proceedings of a Conference in Honor of Paul T. Bateman}, Birkhauser Boston, Boston, MA, 1990, 343�-348

\bibitem{Lovejoy}
{\sc J. Lovejoy}, Ramanujan-type congruences for three colored Frobenius partitions, {\it J. Num. Thy.} {\bf 85} (2) (2000), 283�-290

\bibitem{Ono}
{\sc K. Ono}, Congruences for Frobenius partitions, {\it J. Num. Thy.} {\bf 57} (1) (1996), 170-�180

\bibitem{PauRad} 
{\sc P. Paule and C.-S. Radu}, The Andrews-Sellers Family of Partition Congruences, {\it  Advances in Mathematics} (2012), 819--838

\bibitem{Sel1}
{\sc J. A. Sellers}, Congruences Involving F--Partition Functions, {\it International Journal of Mathematics and Mathematical Sciences} {\bf 17} (1) (1994),
187--188

\bibitem{SelJIMS}
{\sc J. A. Sellers}, An Unexpected Congruence Modulo 5 for 4-Colored Generalized 
Frobenius Partitions, to appear in {\it Journal of Indian Mathematical Society}

\bibitem{Xiong}
{\sc X. Xiong}, Congruences modulo powers of 5 for three-colored Frobenius partitions, arXiv:1003.0072 [math NT], 2010


\end{thebibliography}
\end{document}